\newtheorem{thm}{Theorem}[section]
\newtheorem{cor}[thm]{Corollary}
\newtheorem{prop}[thm]{Proposition}
\newtheorem{lem}[thm]{Lemma}
\theoremstyle{definition}
\newtheorem{rem}[thm]{Remark}
\numberwithin{equation}{section}
\newcommand\A[1]{A_{#1}}
\newcommand\R{\mathbb{R}}
\newcommand\N{\mathbb{N}}
\newcommand\Q{\mathbb{Q}}
\newcommand{\sr}[1]{\mathcal{A}^{[#1]}}
\newcommand{\srq}[4]{\mathcal{A}^{[#1]}_{#4}\left(#2,#3\right)}
\newcommand{\norma}[1]{\left\| #1 \right\| }
\newcommand{\abs}[1]{\left| #1 \right| }
\DeclareMathOperator{\supp}{supp}
\DeclareMathOperator{\dimH}{dim_H}
\begin{document}


\baselineskip=17pt



\title{Limit properties in a family of quasi-arithmetic means}

\author{Pawe{\l} Pasteczka}
\address{Institute of Mathematics\\University of Warsaw\\
02-097 Warszawa, Poland}
\email{ppasteczka@mimuw.edu.pl}


\begin{abstract}
It is known that the family of power means tends to maximum pointwise if we pass argument to infinity. We will give some necessary and sufficient condition for the family of quasi-arithmetic means generated by a functions satisfying certain smoothness conditions to have analogous property.
\end{abstract}

\maketitle

\section{Introduction}
There are several direction of exploration concerning means. Definitely the most popular are inequalities among different families of means. It could be seen in the by-now-classical monography \cite{Bullen}.

In the present paper we are going to discuss some property of the family of quasi-arithmetic means.
This family was introduced in the series of nearly simultaneous paper \cite{kolmogorov,nagumo,definetti} as a generalization of power means. Namely, for any continuous, strictly monotone function $f \colon U \rightarrow \R$ ($U$--an interval) one may define, for any vector of {\it entries} $a=(a_1,\ldots,a_r) \in U^r$ with {\it weights} $w=(w_1,\ldots,w_r)$, where $w_i>0$ and $\sum w_i=1$, a quasi arithmetic mean
$$\sr{f}(a,w):=f^{-1}(w_1f(a_1)+w_2f(a_2)+\cdots+w_rf(a_r)).$$

We will now pass into some limit approaches in a family of means. For a given sequence of means $(M_n)_{n \in \N}$ one could study (whenever exist) a pointwise limit $\lim_{n\rightarrow \infty} M_n$. In many families this limit is either one of element belonging to pertinent family, maximum or minimum. This trichotomy appears in many families of means. Perhaps the most known is the family of Power Means, but is also covers Gini means, Bonferroni means, Mixed means etc.

This phenomena, however, does not appear when it comes to quasi-arithmetic means. Some results concerning this family was proved by Kolesarova \cite{kolesarova}. In 2013 I provide in \cite{P2013} some results under additional smoothness condition (for a smooth and increasing family in a sense described later). Yet another result is implied by consideration of P\'ales \cite{Pa91} (cf. Lemma~\ref{lem:pales}), what was announced by him during our conversation.

We will discuss when the family generated by  $(f_n)_{n \in \N}$, $f_n\colon U \rightarrow \R$ tends to maximum pointwise. More precisely

$$\lim_{n \rightarrow \infty} \sr{f_n}(a,w) = \max(a)\textrm{ for any admissible }a\textrm{ and }w;
$$
hereafter such a family will be called {\it max-family}. Analogously we define min-family.
This definitions is valid for many different means, but very often some natural adaptation is required (e.g. omitting weights, restrict vector to fixed length).

\section{Auxiliary results}
To simplify many proofs among the present note we will restrict our consideration just to the two variable case. 
It will be denoted briefly by
$$\srq fxz\xi:=f^{-1}(\xi f(x)+(1-\xi)f(z)),\qquad x,z \in U,\,\xi \in (0,1).$$

We will request some equivalence-type lemma binding -[general] weighted quasi-arithmetic means, -quasi-arithmetic means of two variables and
-some approach established by Zs.~P\'ales \cite{Pa91}.
\begin{lem}
\label{lem:pales}
Let $U$ be an interval,  $(f_n)$ be a family of continuous, strictly monotone functions. Then the following conditions are equavalent
\begin{itemize}
\item[\textup{(i)}] $(f_n)$ is a max-family,
\item[\textup{(ii)}] $\srq {f_n}xz\xi \rightarrow \max(x,z)$ for $x,\,z \in U$ and $\xi \in (0,1)$,
\item[\textup{(iii)}] $\lim_{n \rightarrow \infty}\frac{f_n(x)-f_n(y)}{f_n(z)-f_n(y)} =0$ for all $x,\,y,\,z \in U$, $x<y<z$
\end{itemize}
\end{lem}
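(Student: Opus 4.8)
The plan is to establish the two equivalences (i)$\Leftrightarrow$(ii) and (ii)$\Leftrightarrow$(iii) separately: the first by an internality-and-squeeze argument that collapses an arbitrary number of entries down to two, and the second by a direct algebraic reformulation of the two-variable mean inequality. Before starting I would record the normalization that makes everything clean: a quasi-arithmetic mean is unchanged if its generator $f_n$ is replaced by any affine image $af_n+b$ with $a\neq 0$, and the ratio in (iii) is likewise invariant under such a replacement (both numerator and denominator are scaled by $a$). In particular all three conditions are insensitive to the sign of each $f_n$, so, choosing the sign separately for every index $n$, I may assume without loss of generality that each $f_n$ is strictly increasing; then $f_n^{-1}$ is increasing as well.

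For (i)$\Rightarrow$(ii) there is nothing to do, since (ii) is exactly the case $r=2$, $a=(x,z)$, $w=(\xi,1-\xi)$ of (i). For the converse (ii)$\Rightarrow$(i), I would fix entries $a=(a_1,\dots,a_r)$ and weights $w$, set $m=\min(a)$, $M=\max(a)$, pick an index $j$ with $a_j=M$, and assume $m<M$ (the case $m=M$ being trivial). Using that $f_n$ is increasing and $a_i\ge m$ for all $i$,
$$\sum_i w_i f_n(a_i)=w_j f_n(M)+\sum_{i\neq j}w_i f_n(a_i)\ \ge\ w_j f_n(M)+(1-w_j)f_n(m),$$
and applying $f_n^{-1}$ yields $\sr{f_n}(a,w)\ge \srq{f_n}{M}{m}{w_j}$. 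Since $w_j\in(0,1)$, condition (ii) gives $\srq{f_n}{M}{m}{w_j}\to\max(M,m)=M$, while the opposite bound $\sum_i w_i f_n(a_i)\le f_n(M)$ gives $\sr{f_n}(a,w)\le M$. The squeeze then forces $\sr{f_n}(a,w)\to M=\max(a)$.

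For (ii)$\Leftrightarrow$(iii) I would reformulate the two-variable convergence pointwise. Fix $x<z$ and $\xi\in(0,1)$; by internality $\srq{f_n}{x}{z}{\xi}\in[x,z]$, so $\srq{f_n}{x}{z}{\xi}\to z$ exactly when for every $y\in(x,z)$ one eventually has $\srq{f_n}{x}{z}{\xi}>y$. Since $f_n$ is increasing, $\srq{f_n}{x}{z}{\xi}>y$ is equivalent to $\xi\bigl(f_n(x)-f_n(y)\bigr)+(1-\xi)\bigl(f_n(z)-f_n(y)\bigr)>0$, and dividing by the positive quantity $f_n(z)-f_n(y)$ turns this into
$$\frac{f_n(x)-f_n(y)}{f_n(z)-f_n(y)}\ >\ -\frac{1-\xi}{\xi},$$
where the left-hand ratio is always negative. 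If (iii) holds this ratio tends to $0$, hence eventually exceeds the fixed negative constant $-\tfrac{1-\xi}{\xi}$ for each $y$, giving (ii). Conversely, given $x<y<z$ and $\varepsilon>0$, choosing $\xi=\tfrac{1}{1+\varepsilon}$ makes $\tfrac{1-\xi}{\xi}=\varepsilon$, and (ii) then forces the ratio eventually above $-\varepsilon$; being negative, it lands in $(-\varepsilon,0)$, so it tends to $0$ and (iii) follows.

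I expect the obstacle to be bookkeeping rather than conceptual. The one genuinely delicate point is matching the universal quantifier over $\xi$ in (ii) with the universal quantifier over the intermediate point $y$ in (iii): one must pick $\xi$ as a function of the target accuracy $\varepsilon$ and verify that the negative ratio, rather than oscillating, is pinned into $(-\varepsilon,0)$. A secondary point to handle carefully is that the reduction to increasing generators is legitimate only because all three conditions are invariant under the sign flip $f_n\mapsto -f_n$, which should be checked explicitly before the normalization is used.
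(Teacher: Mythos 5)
Your proof is correct and follows essentially the same route as the paper: the trivial implication (i)$\Rightarrow$(ii), a squeeze between $\srq{f_n}{\max(a)}{\min(a)}{w_j}$ and $\max(a)$ for (ii)$\Rightarrow$(i), and the algebraic equivalence $\srq{f_n}xz\xi>y \iff \frac{f_n(x)-f_n(y)}{f_n(z)-f_n(y)}>\frac{\xi-1}{\xi}$ with the negativity of the ratio, letting $y\to z$ and $\xi\to 1$, for (ii)$\Leftrightarrow$(iii). The only difference is that you spell out details the paper leaves implicit (the sign-flip normalization to increasing generators and the explicit choice $\xi=\tfrac{1}{1+\varepsilon}$), which is fine.
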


\begin{proof}
Implication $\textup{(i)} \Rightarrow \textup{(ii)}$ is trivial. To prove the opposite implication 
let us notice that $\sr{f}$ is symmetric. Moreover, for any $f$, $a \in U^r$ satisfying $a_1 \le a_2 \le \ldots \le a_r$ and admissible weights $w$, we have
$$\max(a) \ge \sr{f_n}(a,w) \ge \srq{f_n}{a_r}{a_1}{w_r}.$$
Passing $n \rightarrow \infty$ one gets
$$\lim_{n \rightarrow \infty} \sr{f_n}(a,w)=\max(a_1,a_r)=a_r=\max(a).$$

$\textup{(ii)} \Leftrightarrow \textup{(iii)}$ 
Let assume that each $f_n$ is increasing. Then 
$$\frac{f_n(x)-f_n(y)}{f_n(z)-f_n(y)} <0 \text{ for all }x,\,y,\,z \in U,\,x<y<z,\, n \in \N.$$
Assume $x,\,y,\,z \in U$, $x<y<z$ and $\xi \in (0,1)$. One simply gets
\begin{align*}
&\quad y < \srq{f_n}xz\xi,\\
&\iff f_n(y) < \xi f_n(x)+(1-\xi) f_n(z),\\
&\iff \frac{f_n(x)-f_n(y)}{f_n(z)-f_n(y)} > \tfrac{\xi-1}{\xi},\\
&\iff \frac{f_n(x)-f_n(y)}{f_n(z)-f_n(y)} \in \left(\tfrac{\xi-1}{\xi},0 \right).
\end{align*}
Passing $y \rightarrow z$ in the equivalence above provides the $(\Leftarrow)$ part, while passing $\xi \rightarrow 1$ provides the $(\Rightarrow)$ part.
\end{proof}

Now we are going to recall some variation of Mikusi\'nski's result \cite{Mikusinski}. 
\begin{lem}
Let $U$ be an interval, $f,\,g$ be a twice derivable functions with nowhere vanishing first derivative. If we denote $\A{f}:=f''/f'$, then the following conditions are equivalent:
\begin{itemize}
\item $\A{f}(x) \le \A{g}(x)$ for any $x \in U$,
\item $\sr{f}(a,w) \le \sr{g} (a,w)$ for any admissible $a$, $w$,
\item $\srq {f}xy\xi \le \srq gxy\xi$ for any admissible $x$, $y$ and $\xi$.
\end{itemize}
\end{lem}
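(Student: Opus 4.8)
The plan is to reduce all three conditions to a single convexity statement about the auxiliary function $\varphi := g \circ f^{-1}$. First I would observe that both the mean $\sr{f}$ and the quantity $\A{f}$ are left unchanged when $f$ is replaced by $-f$ (indeed $(-f)^{-1}(y)=f^{-1}(-y)$ gives $\sr{-f}=\sr{f}$, and $\A{-f}=(-f'')/(-f')=\A{f}$). Since a twice differentiable function with nowhere vanishing derivative on an interval has a first derivative of constant sign, and is thus strictly monotone, this invariance lets me assume without loss of generality that $f$ and $g$ are both increasing. Under this normalization $\varphi$ is an increasing, twice differentiable function, the twice differentiability coming from that of $f,g$ together with the nonvanishing of $f'$, which makes $f^{-1}$ twice differentiable.

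The computational heart of the argument is an identity for $\varphi''$. Writing $x = f^{-1}(t)$ and differentiating $\varphi = g \circ f^{-1}$ twice, using $(f^{-1})'(t)=1/f'(x)$ and $(f^{-1})''(t)=-f''(x)/f'(x)^3$, I would obtain
$$\varphi''(t) = \frac{g'(x)}{f'(x)^2}\bigl(\A{g}(x) - \A{f}(x)\bigr).$$
Because $f'(x)^2 > 0$ and $g'(x) > 0$ (as $g$ is increasing), this shows that $\varphi$ is convex if and only if $\A{f} \le \A{g}$ everywhere on $U$. This single equivalence is exactly what ties condition (i) to the analytic behaviour of $\varphi$.

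It then remains to translate the two mean inequalities into Jensen-type inequalities for $\varphi$. Setting $t_i = f(a_i)$ and using that both $f^{-1}$ and $\varphi$ are increasing, the comparison $\sr{f}(a,w) \le \sr{g}(a,w)$ is equivalent to $\varphi\bigl(\sum w_i t_i\bigr) \le \sum w_i \varphi(t_i)$, i.e. to the full Jensen inequality for $\varphi$; likewise, with $s=f(x)$ and $t=f(y)$, the inequality $\srq{f}xy\xi \le \srq{g}xy\xi$ is equivalent to the two-point inequality $\varphi(\xi s + (1-\xi)t) \le \xi\varphi(s) + (1-\xi)\varphi(t)$. From here the chain closes quickly: (i) $\Rightarrow$ (ii) because $\varphi'' \ge 0$ yields convexity and hence Jensen; (ii) $\Rightarrow$ (iii) is the trivial restriction to $r = 2$; and (iii) $\Rightarrow$ (i) because the two-point inequality holding for every $\xi \in (0,1)$ is precisely convexity of the continuous function $\varphi$, which forces $\varphi'' \ge 0$ and therefore $\A{f} \le \A{g}$.

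I expect the main obstacle to be bookkeeping rather than conceptual: one must verify that the sign of every inequality is preserved through the substitutions, which hinges on $f^{-1}$ and $\varphi$ being increasing, and one must justify the normalization step and the twice differentiability of $\varphi$ with care. The derivation of the identity for $\varphi''$ and the check that it faithfully records the sign of $\A{g} - \A{f}$ is the one place where a genuine, if routine, computation cannot be avoided.
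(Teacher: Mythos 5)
Your proof is correct, but the comparison with the paper is one-sided: the paper does not prove this lemma at all. It is recalled as ``some variation of Mikusi\'nski's result'' with a citation to Mikusi\'nski's 1948 paper, and no argument is given. Your proposal therefore supplies a self-contained proof where the paper supplies only a reference. The route you take --- normalizing $f$ and $g$ to be increasing via the invariance of $\sr{f}$ and $\A{f}$ under $f \mapsto -f$ (with Darboux's theorem guaranteeing $f'$ has constant sign), reducing both mean inequalities to Jensen-type inequalities for $\varphi = g \circ f^{-1}$, and linking condition (i) to the sign of $\varphi''$ through the identity $\varphi''(t) = \frac{g'(x)}{f'(x)^2}\bigl(\A{g}(x) - \A{f}(x)\bigr)$ --- is the classical comparison argument, essentially the one found in Mikusi\'nski's paper and in Hardy--Littlewood--P\'olya or Bullen's handbook, so it is the natural way to fill the gap. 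All the steps check out: the identity for $\varphi''$ is correct, the translation of $\sr{f}(a,w) \le \sr{g}(a,w)$ into $\varphi\bigl(\sum w_i t_i\bigr) \le \sum w_i \varphi(t_i)$ uses only that $g$ is increasing, and the cycle (i) $\Rightarrow$ (ii) $\Rightarrow$ (iii) $\Rightarrow$ (i) closes because the two-point inequality for all $\xi \in (0,1)$ is the definition of convexity, which for a twice differentiable function forces $\varphi'' \ge 0$. A small merit of your argument worth keeping is that it works under exactly the paper's weak hypotheses (twice derivable, nowhere vanishing first derivative, no continuity of the second derivative assumed), since convexity of a twice differentiable function is equivalent to pointwise nonnegativity of its second derivative.
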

Operator $\A{}$ is so central that, to make notion more compact, we will call a function {\it smooth} if it is twice derivable with nowhere vanishing first derivative -- in fact it is the weakest assumption making definition of $\A{}$ possible.

\begin{rem}
\label{rem:afin} The condition above has its own ,equal-type` version. Namely, the following conditions are equivalent:
\begin{itemize}
\item $\A{f}(x) = \A{g}(x)$ for any $x \in U$,
\item $\sr{f}(a,w) = \sr{g} (a,w)$ for any admissible $a$, $w$,
\item $\srq {f}xy\xi = \srq gxy\xi$ for any admissible $x$, $y$ and $\xi$,
\item $f=\alpha g+\beta$ for some $\alpha, \beta \in \R$, $\alpha \ne 0$.
\end{itemize}
 \end{rem}

For a family $(f_n)_{n\in \N}$, except of already introduced max-family, we will use attributes:
\begin{itemize}
\item {\it smooth} if $f_n$ smooth for any $n$,
\item {\it increasing} if $\sr{f_n}(a,w)\ge \sr{f_m}(a,w)$  for any $n \ge m$ and admissible $a$ and $w$ [under some additional assumptions, by Mikus\'nski's result, we obtain some equivalent definitions]
\item {\it lower bounded} if all functions are derivable and have nowhere vanishing derivative and there exists $C$ such that $f_n'(y)/f_n'(x) \ge e^{C(y-x)}$ for any $n \in \N$ and $x,\,y \in U$, $x<y$.
\end{itemize}

Analogously we define a dual definitions min-, decreasing and upper bounded family. In fact each result presented in this paper has its dual wording, which are omitted, but may be similarly established and proved. 
For a smooth functions it will be also handy to denote
\begin{equation}
X_\infty:=\{x \in U \colon \lim_{n \rightarrow \infty} \A{f_n}(x)=+\infty \}. \label{eq:Xinfty}
\end{equation}

Having this notation, let us recall some major result from \cite{P2013}:
\begin{prop}\label{prop:max_2013}
Let $U$ be a closed, bounded interval, $(f_n)_{n \in \N}$ be a smooth, increasing family defined on $U$.
\begin{itemize}
\item If $X_{\infty}=U$ then $(f_n)$ is a max-family.
\item If $(f_n)$ is a max-family then $X_{\infty}$ is a dense subset of $U$.
\end{itemize}
\end{prop}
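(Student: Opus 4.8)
The plan is to reduce both statements to the criterion of Lemma~\ref{lem:pales}(iii) through the elementary identity $\A{f}=(\log\abs{f'})'$. Since each $f_n$ is smooth, $f_n'$ keeps a constant sign, so integrating from a base point $y$ gives $f_n'(t)=f_n'(y)\exp\bigl(\int_y^t\A{f_n}(s)\,ds\bigr)$. We may assume every $f_n$ is increasing: replacing $f_n$ by $-f_n$ changes neither $\A{f_n}$, nor $\sr{f_n}$, nor the ratio in (iii), so neither smoothness, the increasing property, the max-family property, nor $X_\infty$ is affected. Writing $g_n(t):=\exp\bigl(\int_y^t\A{f_n}\bigr)$ and integrating, condition (iii) becomes
\[
\frac{f_n(x)-f_n(y)}{f_n(z)-f_n(y)}=-\,\frac{\int_x^y g_n(t)\,dt}{\int_y^z g_n(t)\,dt}\qquad(x<y<z),
\]
so everything reduces to estimating this ratio of integrals. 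I would first note that, since the family is increasing, the Mikusi\'nski-type lemma gives $\A{f_n}\le\A{f_m}$ whenever $n\le m$; hence $n\mapsto\A{f_n}(x)$ is nondecreasing, the limit $L(x):=\lim_n\A{f_n}(x)$ exists in $(-\infty,+\infty]$, and $X_\infty=\{x\in U\colon L(x)=+\infty\}$.

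For the first bullet, fix $x<y<z$ and put $c:=\tfrac{y+z}{2}$. Continuity of one function $\A{f_{n_0}}$ on the compact interval $[x,z]$ yields a constant $B$ with $\A{f_n}\ge\A{f_{n_0}}\ge-B$ there for all $n\ge n_0$. This bounds the numerator by a constant, $\int_x^y g_n\le(y-x)e^{B(y-x)}$, while for the denominator I would integrate only over $t\in[c,z]$ and bound $\int_c^t\A{f_n}\ge-B(z-c)$ to obtain $\int_y^z g_n\ge(z-c)e^{-B(z-c)}\exp\bigl(\int_y^c\A{f_n}\bigr)$. Because $\A{f_n}$ increases pointwise to $+\infty$ on $[y,c]\subseteq U$ and is bounded below by $-B$, the monotone convergence theorem gives $\int_y^c\A{f_n}\to+\infty$; hence the denominator diverges, the ratio tends to $0$, and Lemma~\ref{lem:pales} yields the max-family property.

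For the second bullet I would argue contrapositively. If $X_\infty$ is not dense, there is an open subinterval $I\subseteq U$ on which $L$ is finite everywhere. As an increasing pointwise limit of continuous functions $L$ is lower semicontinuous, so each sublevel set $\{x\in I\colon L(x)\le k\}$ is closed in $I$; since these cover $I$, the Baire category theorem produces a compact subinterval $[p,q]\subseteq I$ and an integer $k$ with $\A{f_n}\le L\le k$ on $[p,q]$ for every $n$. Choosing $x<y<z$ inside $[p,q]$, the uniform upper bound $\A{f_n}\le k$ alone forces $\int_x^y g_n\ge(y-x)e^{-k(y-x)}$ and $\int_y^z g_n\le(z-y)e^{k(z-y)}$, so the ratio above stays bounded away from $0$ uniformly in $n$. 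Thus $(f_n)$ violates Lemma~\ref{lem:pales}(iii) and is not a max-family, which is the required contradiction.

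The routine parts are the two pairs of exponential estimates; the real content, and the step I expect to be most delicate, is the second bullet's extraction of a \emph{uniform} upper bound for $\A{f_n}$ on a whole subinterval. Only pointwise convergence $\A{f_n}\to L$ is available, so recognizing that $L$ is lower semicontinuous and invoking Baire category to pass from ``finite at every point'' to ``bounded on some subinterval'' is the crux; the monotone convergence step in the first bullet is the dual subtlety, where again only pointwise blow-up is known.
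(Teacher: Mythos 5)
You should first note that the paper itself never proves Proposition~\ref{prop:max_2013}: it is recalled verbatim from \cite{P2013}, so there is no in-paper argument to compare against, and your proposal has to be judged as a self-contained proof. Judged that way, the skeleton is the right one: reduce to Lemma~\ref{lem:pales}(iii), use Mikusi\'nski's lemma to turn the increasing-family hypothesis into pointwise monotonicity of $n\mapsto \A{f_n}(x)$, then monotone convergence for the first bullet and a Baire-category/uniform-bound argument for the second. Both halves are correct \emph{if} each $\A{f_n}$ is continuous, i.e.\ for $C^2$ generators. The difficulty is that the paper's ``smooth'' is deliberately weaker --- twice derivable with nowhere vanishing first derivative, advertised as ``the weakest assumption making the definition of $\A{}$ possible''. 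Under that reading $\A{f_n}$ is only an everywhere-defined derivative (of $\log\abs{f_n'}$), hence Darboux--Baire class one: possibly discontinuous, possibly unbounded on compact intervals, possibly not even Lebesgue integrable; already your opening identity $f_n'(t)=f_n'(y)\exp\bigl(\int_y^t\A{f_n}\bigr)$ is an instance of the fundamental theorem of calculus that requires justification in this generality.

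For the first bullet this is a repairable blemish: you never really need $\A{f_{n_0}}\ge -B$, because $f_{n_0}'$ itself is continuous and positive on $[x,z]$ (being the derivative of a twice derivable function), the mean value theorem shows $\log f_n'-\log f_{n_0}'$ is nondecreasing for $n\ge n_0$, and the inequality $\phi(b)-\phi(a)\ge\int_a^b\phi'$, valid for every nondecreasing differentiable $\phi$, substitutes for the fundamental theorem of calculus and lets your monotone-convergence step go through. For the second bullet the gap is genuine. Without continuity, $L=\sup_n\A{f_n}$ need not be lower semicontinuous; sublevel sets of Baire-1 functions are only $G_\delta$, so $\{L\le k\}$ is $G_\delta$ rather than closed, and Baire category yields only that some $\{L\le k\}$ is \emph{comeager in} a subinterval, not that it \emph{contains} one. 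Worse, a bound on a comeager set does not control a derivative: Pompeiu's strictly increasing, everywhere differentiable function $P$ with $P'=0$ on a dense $G_\delta$ set gives (via $\log f'=P$) a smooth $f$ with $\A{f}=0$ on a comeager set while $\log f'$ strictly increases, so ``$\A{f_n}\le k$ on a comeager set'' does not imply the Lipschitz estimate $\log f_n'(t)-\log f_n'(y)\le k(t-y)$ on which both of your exponential bounds rest. An almost-everywhere bound would suffice (Goldowsky--Tonelli), but pointwise finiteness of $L$ on an interval does not obviously produce a uniform a.e.\ bound on any subinterval. So as written the necessity half stands only under the stronger $C^2$ reading of smoothness; under the paper's stated hypotheses it needs a different idea.
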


\section{Main result}
It is a natural question, how to fulfilled a gap between necessary and sufficient condition
(cf. \cite[Open Problem]{P2013}).
The answer is fairly non-trivial. Namely, the fact if the family is a max-family cannot be
completely characterized by the properties of $X_\infty$. Some examples, counter-examples
as well as the strengthening of Proposition~\ref{prop:max_2013} will be given in section~\ref{sec:maxXinfty}.

At the moment we will give necessary and sufficient result for a family of derivable functions to be max.
Namely we are going to prove the following

\begin{thm}
\label{thm:main}
Let $U$ be an interval, $(f_n)$ be a family of derivable functions having nowhere vanishing first derivative, lower bounded family defined on a set $U$. Then
$$
\Big( (f_n) \textrm{ is a max-family} \Big)
\iff
\Bigg( \lim_{n \rightarrow \infty}\frac{f_n'(q)}{f_n'(p)} =\infty \text{ for all }p,\,q \in U,\,p<q \Bigg) 
.$$
\end{thm}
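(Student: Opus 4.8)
By Lemma~\ref{lem:pales}, being a max-family is equivalent to condition (iii): $\lim_{n\to\infty} \frac{f_n(x)-f_n(y)}{f_n(z)-f_n(y)} = 0$ for all $x<y<z$. So I need to connect this difference-quotient condition to the derivative-ratio condition $\lim_{n\to\infty} \frac{f_n'(q)}{f_n'(p)} = \infty$ for $p<q$. The natural bridge is the mean value theorem: $f_n(z)-f_n(y) = f_n'(\eta_n)(z-y)$ and $f_n(y)-f_n(x) = f_n'(\theta_n)(y-x)$ for some $\theta_n \in (x,y)$ and $\eta_n \in (y,z)$, so the difference quotient in (iii) becomes (up to sign and the fixed positive factors $(z-y)$, $(y-x)$) essentially $\frac{f_n'(\theta_n)}{f_n'(\eta_n)}$. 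Since the functions may be decreasing, I should first reduce to the increasing case exactly as in the proof of Lemma~\ref{lem:pales}, or simply work with absolute values throughout; the lower bounded hypothesis forces $f_n'$ to keep a constant sign for each $n$, so ratios of derivatives are well-behaved.

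So let me think carefully about where the lower bounded hypothesis is essential. The MVT points $\theta_n,\eta_n$ are not controlled — they drift with $n$ — so I cannot directly equate $\frac{f_n'(\theta_n)}{f_n'(\eta_n)}$ with $\frac{f_n'(q)}{f_n'(p)}$ at fixed points. This is exactly where lower boundedness saves me: the bound $f_n'(y)/f_n'(x) \ge e^{C(y-x)}$ controls how much the derivative ratio can swing over the interval, uniformly in $n$. The plan is to squeeze the uncontrolled ratio $\frac{f_n'(\theta_n)}{f_n'(\eta_n)}$ between ratios at fixed endpoints using this bound.

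**The two implications.** For ($\Leftarrow$): assume the derivative-ratio condition. Fix $x<y<z$; I want the difference quotient to vanish. Write $\frac{|f_n(y)-f_n(x)|}{|f_n(z)-f_n(y)|} = \frac{f_n'(\theta_n)(y-x)}{f_n'(\eta_n)(z-y)}$ with $\theta_n<y<\eta_n$. Now bound $\frac{f_n'(\theta_n)}{f_n'(\eta_n)} = \frac{f_n'(\theta_n)}{f_n'(y)}\cdot\frac{f_n'(y)}{f_n'(\eta_n)}$. The second factor is $\le e^{-C(\eta_n-y)}$ but that alone is not small; the useful estimate is $\frac{f_n'(y)}{f_n'(\eta_n)} \le \frac{f_n'(y)}{f_n'(z)} \cdot (\text{bounded correction})$ — more precisely, I fix any $z'$ with $y<z'<z$, note $\eta_n$ eventually lies in a region where lower boundedness gives $\frac{f_n'(y)}{f_n'(\eta_n)} \le \frac{f_n'(y)}{f_n'(z')}e^{-C(\eta_n-z')}$ once $\eta_n>z'$, and the factor $\frac{f_n'(y)}{f_n'(z')}\to 0$ by hypothesis. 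The point $\theta_n$-factor $\frac{f_n'(\theta_n)}{f_n'(y)}$ stays bounded above by $e^{-C(\theta_n-y)} \le e^{C(y-x)}$. Multiplying, the quotient $\to 0$, giving (iii). For ($\Rightarrow$): assume max-family, so (iii) holds. Fix $p<q$ and pick auxiliary points, say $p<q<r$; apply (iii) with $(x,y,z)=(p,q,r)$ to get $\frac{f_n(q)-f_n(p)}{f_n(r)-f_n(q)}\to 0$, i.e. $\frac{f_n(r)-f_n(q)}{f_n(q)-f_n(p)}\to\infty$. Then convert these differences back to derivatives at fixed points via the lower bound $f_n'(y)/f_n'(x)\ge e^{C(y-x)}$, which lets me sandwich each difference $f_n(q)-f_n(p)=\int_p^q f_n'$ between multiples of $f_n'(p)(q-p)$ and $f_n'(q)(q-p)$, thereby extracting $\frac{f_n'(q)}{f_n'(p)}\to\infty$.

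**Main obstacle.** The delicate point is the non-uniformity of the MVT intermediate points $\theta_n,\eta_n$: the whole argument turns on the fact that lower boundedness lets me compare $f_n'$ at a drifting point to $f_n'$ at a nearby \emph{fixed} point with an error that is uniformly bounded in $n$ (the $e^{C(\cdot)}$ factors). I expect the cleanest execution is to avoid MVT altogether and work with the integral representation $f_n(\beta)-f_n(\alpha)=\int_\alpha^\beta f_n'(t)\,dt$, bounding $f_n'(t)$ above and below on each subinterval by the value at an endpoint times the uniform exponential factor coming from lower boundedness. This replaces the uncontrolled intermediate points with honest integral estimates and makes both the vanishing of the difference quotient and the blow-up of the derivative ratio fall out by elementary inequalities. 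The only care needed is the sign bookkeeping when the $f_n$ are decreasing, handled once at the start by passing to $|f_n'|$ and noting all three conditions are invariant under this reduction.
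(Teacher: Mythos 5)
The one step that genuinely fails is your $(\Rightarrow)$ conversion. After reducing to $f_n'>0$, the lower\-/bounded property controls $f_n(\beta)-f_n(\alpha)=\int_\alpha^\beta f_n'$ \emph{from below} by a constant times the derivative at the lower endpoint and \emph{from above} by a constant times the derivative at the upper endpoint --- and only in those directions. So with your triple $(x,y,z)=(p,q,r)$, $r>q$, you get $f_n(q)-f_n(p)\ge c_1 f_n'(p)$ and $f_n(r)-f_n(q)\le c_2 f_n'(r)$; feeding these into Lemma~\ref{lem:pales}(iii) yields $f_n'(r)/f_n'(p)\to\infty$, i.e.\ the conclusion for the pair $(p,r)$, not $(p,q)$. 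You cannot do better from this triple, because the hypothesis bounds $f_n(r)-f_n(q)$ in terms of $f_n'(q)$ only from \emph{below} ($f_n(r)-f_n(q)\ge c\,f_n'(q)$), which is useless here; moreover no $r>q$ exists at all when $q$ is the right endpoint of $U$. The repair is immediate and stays inside your toolkit: for the target pair $p<q$ take the triple $(p,m,q)$ with $m\in(p,q)$; then (iii) gives $\frac{f_n(m)-f_n(p)}{f_n(q)-f_n(m)}\to 0$, while $f_n(m)-f_n(p)\ge c_1 f_n'(p)$ and $f_n(q)-f_n(m)\le c_2 f_n'(q)$, whence $f_n'(q)/f_n'(p)\to\infty$. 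The auxiliary point must sit \emph{inside} $(p,q)$, not beyond $q$.

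The rest of the proposal is sound, provided you commit to the integral version you yourself advocate at the end. You are right to distrust the MVT variant of $(\Leftarrow)$: nothing forces the intermediate point $\eta_n$ to drift past the fixed point $z'$, so the clause ``once $\eta_n>z'$'' is an unproved assumption and that version has a hole. The integral form closes it: for $x<y<z$ fix $z'\in(y,z)$; then $\abs{f_n(y)-f_n(x)}\le \kappa_1\abs{f_n'(y)}$ and $\abs{f_n(z)-f_n(y)}\ge\int_{z'}^z\abs{f_n'}\ge \kappa_2\abs{f_n'(z')}$ with $\kappa_1,\kappa_2>0$ depending only on $C$ and the points, so the quotient in (iii) tends to $0$ because $f_n'(z')/f_n'(y)\to\infty$. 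This is, in substance, the same estimate as the paper's Lemma~\ref{lem:leftarrowtech} (there phrased as: if $f'(z-\varepsilon)/f'(y)$ exceeds a threshold $\Phi$ then $\srq fxz\xi\ge y$), just routed through condition (iii) of Lemma~\ref{lem:pales} rather than condition (ii). Once the $(\Rightarrow)$ triple is fixed as above, your argument is complete and is actually somewhat more direct than the paper's, which proves $(\Rightarrow)$ by contraposition: a subsequence with bounded ratio $f_{n_k}'(z)/f_{n_k}'(x)$ is shown to admit a weight $\xi$ with $\srq{f_{n_k}}xz\xi<y$ for all $k$.
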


If a family $(f_n)$ consists of smooth functions then this theorem has an immediate
\begin{cor}
\label{cor:main}
Let $U$ be an interval, $(f_n)$ be a smooth, lower bounded family defined on a set $U$. Then
$$
\Big( (f_n) \textrm{ is a max-family} \Big)
\iff
\Bigg( \lim_{n \rightarrow \infty} \int_p^q \A{f_n}(x) dx =\infty \text{ for all }p,\,q \in U,\,p<q  \Bigg) 
.$$
\end{cor}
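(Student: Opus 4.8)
The plan is to deduce Corollary~\ref{cor:main} directly from Theorem~\ref{thm:main}, since a smooth family is in particular a family of derivable functions with nowhere vanishing first derivative; being moreover lower bounded, it satisfies every hypothesis of the theorem. Consequently it suffices to show that, for each fixed pair $p<q$ in $U$, the derivative-ratio condition $\lim_{n\to\infty} f_n'(q)/f_n'(p)=\infty$ of the theorem is equivalent to the integral condition $\lim_{n\to\infty}\int_p^q \A{f_n}(x)\,dx=\infty$ of the corollary. The whole argument thus reduces to recognizing the integrand as a logarithmic derivative and then passing the limit through the logarithm.

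First I would record the elementary identity relating $\A{}$ to the logarithmic derivative. Since each $f_n$ is smooth, $f_n'$ is continuous and never vanishes on the interval $U$, hence it keeps a constant sign; therefore $x\mapsto \log\abs{f_n'(x)}$ is well defined and differentiable on $U$ with
$$\frac{d}{dx}\log\abs{f_n'(x)}=\frac{f_n''(x)}{f_n'(x)}=\A{f_n}(x).$$
Applying the fundamental theorem of calculus on $[p,q]$ then gives
$$\int_p^q \A{f_n}(x)\,dx=\log\abs{f_n'(q)}-\log\abs{f_n'(p)}=\log\frac{f_n'(q)}{f_n'(p)},$$
where the last equality uses that $f_n'(p)$ and $f_n'(q)$ share their sign, so their quotient is positive.

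Finally I would pass the equivalence through the logarithm. Because $t\mapsto\log t$ is an increasing bijection of $(0,\infty)$ onto $\R$ with $\log t\to\infty$ exactly when $t\to\infty$, the displayed identity shows that $\int_p^q \A{f_n}(x)\,dx\to\infty$ if and only if $f_n'(q)/f_n'(p)\to\infty$, for every fixed $p<q$. Quantifying over all such pairs and invoking Theorem~\ref{thm:main} yields the asserted equivalence. The one point demanding care—indeed the only genuine obstacle in an otherwise immediate deduction—is the legitimacy of the fundamental theorem of calculus, i.e. the integrability of $\A{f_n}$; this is automatic once the $f_n$ are of class $C^2$ (and is in any case implicit in the very statement of the integral condition), while the constant-sign property of $f_n'$ on the interval $U$ is what guarantees both that $\log\abs{f_n'}$ is differentiable and that the quotient inside the logarithm is positive.
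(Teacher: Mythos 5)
Your proposal is correct and coincides with the paper's (implicit) argument: the paper derives Corollary~\ref{cor:main} from Theorem~\ref{thm:main} as an immediate consequence of the identity $\int_p^q \A{f_n}(x)\,dx=\log\bigl(f_n'(q)/f_n'(p)\bigr)$, exactly the logarithmic-derivative observation you make. Your added remarks on integrability and the constant sign of $f_n'$ only make explicit what the paper leaves unsaid.
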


\section{Proof of Theorem~\ref{thm:main}}

Among all proof we will assume that the constant $C$ appearing in the definition of lower bound family is negative.

\subsection{Proof of $(\Rightarrow)$ implication}

Let us assume that there exists $x<z$ such that 
$$\liminf_{n \rightarrow \infty} \frac{f_n'(z)}{f_n'(x)} < \infty.$$ 
Then, there exists $\bar{H}>0$ and a subsequence $(n_1,n_2,\ldots)$ satisfying  
$$\frac{f_{n_k}'(z)}{f_{n_k}'(x)} < \bar{H},\quad k \in \N.$$ 
In particular, by lower bounded property, for any $k \in \N$ and $p,q \in[x,y]$,
\begin{align*}
\frac{f_{n_k}'(q)}{f_{n_k}'(p)} 
&= \frac{f_{n_k}'(z)}{f_{n_k}'(x)}\cdot\frac{f_{n_k}'(x)}{f_{n_k}'(p)}\cdot\frac{f_{n_k}'(q)}{f_{n_k}'(z)}
\le \frac{f_{n_k}'(z)}{f_{n_k}'(x)} e^{C(x-p)}e^{C(q-z)}
\le \bar{H}e^{2C(x-z)}
\end{align*}

Hence, for $H:=\bar{H}e^{2C(x-z)}$,
$$\frac{f_{n_k}'(q)}{f_{n_k}'(p)} < H,\quad k \in \N \textrm{ and }p,q \in[x,z].$$ 

Fix $y \in (x,z)$ and, by Remark~\ref{rem:afin}, assume $f_n'(y)=1$, $f_n(y)=0$, $n \in \N$. Whence
\begin{equation}
f_n(\tau)=\int_y^\tau \frac{f_n'(t)}{f_n'(y)} dt,\quad n \in \N,\,\tau \in U. \label{eq:ftau}
\end{equation}
one has $f_{n_k}(\tau) \le H\cdot(\tau-y)$ for $\tau \in (y,z)$.
In particular 
$$f_{n_k}(z) \le H\cdot(z-y)$$
Moreover for any $k \in \N$ we have the following implications:
\begin{align*}
f_{n_k}'(y)/f_{n_k}'(t) &\le H,\quad t \in (x,y),\\
f_{n_k}'(t)/f_{n_k}'(y)&\ge \tfrac1H,\quad t \in (x,y),\\
f_{n_k}'(t)&\ge \tfrac1H,\quad t \in (x,y),\\
\int_x^y f_{n_k}'(t) dt &\ge \tfrac{y-x}H,\\
-f_{n_k}(x) &\ge \tfrac{y-x}H,\\
f_{n_k}(x) &\le \tfrac{x-y}H.
\end{align*}

But $\tfrac{x-y}H<0$, therefore there exists $\xi>0$ such that
$$\xi f_{n_k}(x)+(1-\xi)f_{n_k}(z) <0,\quad k \in \N.$$
But, by~\eqref{eq:ftau}, $f_{n_k}$ is increasing and $f_{n_k}(y)=0$. Whence,
$$\srq {f_{n_k}}xz\xi<y,\quad k\in \N.$$

\subsection{$\Leftarrow$}
In the major part of the proof of this implication we will work toward the single function $f$ 
satisfying 
\begin{equation}
\frac{f'(y)}{f'(x)}\ge e^{C\cdot(y-x)} \text{ for some }C\textrm{ and all }x,\,y \in U,\,y>x. \label{eq:low_bd}
\end{equation}
Let us establish the following
\begin{lem}\label{lem:leftarrowtech}
Let $I$ be an interval, $f \colon I \rightarrow \R$ be a differentiable function with nowhere vanishing derivative satisfying \eqref{eq:low_bd} for some $C<0$.
Let us take $\xi \in (0,1)$ and $x,\,y,\,z\in I$ satisfying $x<y<z$. Then there exists
$\Phi=\Phi(\xi,C,\varepsilon,x,y)$ such that for any $\varepsilon \in (0,z-y)$
$$\frac{f'(z-\varepsilon)}{f'(y)} \ge \Phi \Rightarrow  \srq fxz\xi \ge y.$$
\end{lem}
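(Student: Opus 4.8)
The plan is to reduce the conclusion $\srq{f}{x}{z}{\xi}\ge y$ to a single scalar inequality and then estimate the two relevant increments of $f$ separately. First I observe that hypothesis \eqref{eq:low_bd} forces $f'$ to have constant sign: if $f'$ changed sign, some pair $x<y$ would give $f'(y)/f'(x)<0<e^{C(y-x)}$, a contradiction. Hence $f$ is strictly monotone and $f^{-1}$ is monotone; assuming $f$ increasing (the decreasing case is entirely analogous, all increments changing sign while the final ratio inequality is unchanged), applying the increasing map $f^{-1}$ gives
\[
\srq{f}{x}{z}{\xi}\ge y \iff \xi f(x)+(1-\xi)f(z)\ge f(y) \iff \frac{f(z)-f(y)}{f(y)-f(x)}\ge\frac{\xi}{1-\xi}.
\]
Thus it suffices to bound the numerator from below and the denominator from above.

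For the denominator I write $f(y)-f(x)=\int_x^y f'(t)\,dt$ and use \eqref{eq:low_bd} in the form $f'(t)\le e^{-C(y-t)}f'(y)\le e^{-C(y-x)}f'(y)$ for $t\in[x,y]$ (recall $C<0$, so $-C>0$), which yields
\[
f(y)-f(x)\le (y-x)\,e^{-C(y-x)}f'(y).
\]
For the numerator the key point is that \eqref{eq:low_bd} propagates a lower bound on $f'$ to \emph{larger} arguments: from the hypothesis $f'(z-\varepsilon)/f'(y)\ge\Phi$ and \eqref{eq:low_bd} applied to the pair $(z-\varepsilon,t)$ I obtain $f'(t)\ge e^{C(t-(z-\varepsilon))}f'(z-\varepsilon)\ge e^{C\varepsilon}\Phi\, f'(y)$ for every $t\in[z-\varepsilon,z]$. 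Discarding the nonnegative contribution of $[y,z-\varepsilon]$ then gives
\[
f(z)-f(y)\ge\int_{z-\varepsilon}^{z}f'(t)\,dt\ge \varepsilon\,e^{C\varepsilon}\,\Phi\,f'(y).
\]

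Combining the two estimates, the factor $f'(y)>0$ cancels and
\[
\frac{f(z)-f(y)}{f(y)-f(x)}\ge\frac{\varepsilon\,e^{C\varepsilon}\,\Phi}{(y-x)\,e^{-C(y-x)}}=\frac{\varepsilon}{y-x}\,e^{C(\varepsilon+y-x)}\,\Phi .
\]
Hence the required inequality holds as soon as the right-hand side is at least $\xi/(1-\xi)$, so it suffices to take
\[
\Phi=\Phi(\xi,C,\varepsilon,x,y):=\frac{\xi}{1-\xi}\cdot\frac{y-x}{\varepsilon}\,e^{-C(\varepsilon+y-x)},
\]
which indeed depends only on the listed parameters and not on $z$. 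I do not expect a genuine obstacle; the one point needing care is that the hypothesis controls $f'$ only at the single node $z-\varepsilon$, so the lower estimate for $f(z)-f(y)$ must be built on the interval $[z-\varepsilon,z]$ lying to the right of that node — this is precisely the direction in which \eqref{eq:low_bd} lets us transport lower bounds, whereas on $[y,z-\varepsilon]$ no comparable bound is available.
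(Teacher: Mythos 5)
Your proof is correct and follows essentially the same route as the paper's: reduce $\srq fxz\xi \ge y$ to a scalar inequality via monotonicity of $f$, bound the increment of $f$ over $[x,y]$ from above and the increment over $[z-\varepsilon,z]$ from below using \eqref{eq:low_bd} (anchored at $y$ and at $z-\varepsilon$ respectively, discarding the middle piece $[y,z-\varepsilon]$), and solve for $\Phi$. The only differences are cosmetic: the paper first normalizes $f(y)=0$, $f'(y)=1$ via Remark~\ref{rem:afin} and integrates the exponential bounds exactly, obtaining terms like $\tfrac1C(e^{C\varepsilon}-1)$, whereas you keep $f'(y)$ explicit and use the cruder rectangle bounds $(y-x)e^{-C(y-x)}$ and $\varepsilon e^{C\varepsilon}$, which yields an equally valid explicit $\Phi$ depending only on the permitted parameters.
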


It could be observe that the $(\Leftarrow)$ part of Theorem~\ref{thm:main} is implied by Lemma~\ref{lem:leftarrowtech}.
Indeed, by the definition, $\srq fxz\xi = \srq fzx{1-\xi}$ therefore let us assume, with no loss of generality, $z>x$. Moreover, by Lemma~\ref{lem:leftarrowtech}, there exists $n_\delta$ such that 
$$\frac{f_n'(z-\varepsilon)}{f_n'(y)} \ge \Phi(\xi,C,\varepsilon,x,z-\delta) \textrm{ for any }\delta>0 \textrm{ and }n>n_\delta.$$
Thus $\srq{f_n}xz\xi \ge z-\delta$ for any $\delta>0$ and $n>n_\delta$.
Whence 
$$\lim_{n \rightarrow \infty}\srq{f_n}xz\xi=z.$$

\subsection{Proof of Lemma~\ref{lem:leftarrowtech}}
By Remark~\ref{rem:afin}, let us assume, with no loss of generality,
$$f(\tau)=\int_y^\tau \frac{f'(t)}{f'(y)} dt,\,\tau \in U.$$

We will estate lower bound of $f(x)$ and, later, $f(z)$. By $f'(y)=1$, one obtains
$$f'(\kappa) \le e^{C(\kappa-y)},\,\kappa \in (x,y).$$
Whence,
$$f(y)-f(x) = \int_x^y f'(\kappa) d\kappa \le \int_x^y e^{C(\kappa-y)} d\kappa =\tfrac1C (1-e^{C(x-y)}).$$
Therefore, by $f(y)=0$,
$$f(x)\ge \tfrac1C (e^{C(x-y)}-1).$$

Let us calculate some lower bound of $f(z)$. Fix $\varepsilon \in (0,z-y)$. One has
\begin{align*}
f(z)
&=\int_y^z \frac{f'(t)}{f'(y)} dt \\
&\ge \int_{z-\varepsilon}^z \frac{f'(t)}{f'(y)} dt\\
&= \frac{f'(z-\varepsilon)}{f'(y)}\int_{z-\varepsilon}^z \frac{f'(t)}{f'(z-\varepsilon)}dt\\
&\ge \frac{f'(z-\varepsilon)}{f'(y)} \int_{z-\varepsilon}^z e^{C(t-z+\varepsilon)}dt\\
&=\frac{1}{C}  \cdot \frac{f'(z-\varepsilon)}{f'(y)} \left(e^{C\varepsilon}-1\right)
\end{align*}

We will prove that $\srq fxz\xi\ge y$ for sufficiently large $f'(z-\varepsilon)/f(y)$. Indeed, we have a series of $(\Leftarrow)$ implications:

\begin{align*} 
&\qquad y\le \srq{f}xz\xi\\
&\Leftrightarrow f(y)\le \xi f(x)+(1-\xi)f(z)\\
&\Leftarrow  0 \le \tfrac{\xi}C \cdot (e^{C(x-y)}-1) + \tfrac{1-\xi}{C} \cdot \frac{f'(z-\varepsilon)}{f'(y)} \left(e^{C\varepsilon}-1\right)\\
&\Leftrightarrow \frac{f'(z-\varepsilon)}{f'(y)} \ge \Phi \textrm{ for some }\Phi=\Phi(\xi,C,\varepsilon,x,y).
\end{align*}
In the previous, equivalent, calculation the reader should pay attention on the sign of each term. Lastly,
$$\frac{f'(z-\varepsilon)}{f'(y)} \ge \Phi \Rightarrow  \srq{f}xz\xi \ge y.$$

\section{\label{sec:maxXinfty} Relations between max-family and $X_\infty$ set}
At the moment we are heading toward possible strengthening of Proposition~\ref{prop:max_2013}.
We will present  a vary situations in a sequence of propositions [examples].
Let us denote by $\lambda,\,H^d,\,\dimH$ Lebesgue measure, $d$-dimensional Hausdorff measure and Hausdorff dimension, respectively.
Moreover used will be convention \eqref{eq:Xinfty}.

\begin{prop}
Let $U$ be an interval. Let $V \subset U$ such that there exists an open interval $W \subset U$ such that $\lambda(V \cap W)=0$. 
Then there exists a smooth, increasing family $(f_n)_{n \in \N}$, $f_n \colon U \rightarrow \R$, which is not max-family but $X_\infty \supset V$.
\end{prop}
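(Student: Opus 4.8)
The plan is to reduce every requirement to a single auxiliary sequence $g_n:=\A{f_n}$ and then recover $f_n$ by integrating twice. By the variant of Mikusiński's result recalled above (its equivalence expressed through $\A{\,}$), a smooth family is \emph{increasing} precisely when $\A{f_n}(x)\le\A{f_{n+1}}(x)$ for all $x$; by the convention \eqref{eq:Xinfty}, $X_\infty\supset V$ means $\A{f_n}(x)\to+\infty$ for every $x\in V$; and by Lemma~\ref{lem:pales} the family fails to be a max-family as soon as condition (iii) there breaks for a single triple $x<y<z$. So I first fix a compact subinterval $[p,q]\subset W$ and a point $y\in(p,q)$, and I aim to construct continuous functions $g_n\ge0$ with $g_n\le g_{n+1}$, with $g_n\to+\infty$ pointwise on $V$, and with $\int_p^q g_n\,dx\le K$ for some $K$ independent of $n$. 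Normalising in the spirit of Remark~\ref{rem:afin} by $f_n(y)=0$, $f_n'(y)=1$, I then set $f_n(\tau):=\int_y^\tau \exp\!\big(\int_y^t g_n(s)\,ds\big)\,dt$; this $f_n$ is smooth (its derivative $\exp(\int_y^\cdot g_n)$ never vanishes), satisfies $\A{f_n}=g_n$, and the three structural demands (smooth, increasing, $X_\infty\supset V$) hold automatically.

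Next I check that the uniform bound on $\int_p^q g_n$ expels the family from the max-class. With the normalisation above, $f_n'(t)=\exp(\int_y^t g_n)$; since $g_n\ge0$ we have $0\le\int_y^t g_n\le K$ for $t\in[y,q]$ and $0\le\int_t^y g_n\le K$ for $t\in[p,y]$, whence $f_n(q)\le e^{K}(q-y)$ and $-f_n(p)\ge e^{-K}(y-p)$. Therefore
$$\frac{f_n(p)-f_n(y)}{f_n(q)-f_n(y)}=\frac{f_n(p)}{f_n(q)}\le-e^{-2K}\,\frac{y-p}{q-y}<0,$$
so the ratio in Lemma~\ref{lem:pales}(iii) cannot tend to $0$ for the triple $p<y<q$, and $(f_n)$ is not a max-family.

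Constructing $(g_n)$ is where the real work lies, and it is the main obstacle: I must let $g_n$ blow up on $V\cap W$ while keeping $\int_p^q g_n$ bounded, even though $V\cap W$ may be dense in $[p,q]$. The device is to build the \emph{limit} first. Since $\lambda(V\cap W)=0$, I choose open sets $O_m\subset W$ with $V\cap W\subset O_m$ and $\lambda(O_m)<4^{-m}$, and put $h:=\sum_{m\ge1}\mathbf 1_{O_m}$; this $h$ is nonnegative and lower semicontinuous, equals $+\infty$ on $V\cap W$, and obeys $\int_p^q h\le\sum_m\lambda(O_m)<\infty$. To reach $V\setminus W\subset U\setminus W$ without touching $[p,q]$, I set $\widetilde O:=\{x:\operatorname{dist}(x,U\setminus W)<\tfrac12\operatorname{dist}([p,q],U\setminus W)\}$, an open set containing $U\setminus W$ and disjoint from $[p,q]$, and define $g_\infty:=h+\infty\cdot\mathbf 1_{\widetilde O}$ (the second term being vacuous when $W=U$). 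Then $g_\infty$ is nonnegative and lower semicontinuous, equals $+\infty$ on all of $V$, and $\int_p^q g_\infty=\int_p^q h<\infty$.

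Finally I approximate $g_\infty$ from below by the Lipschitz inf-convolutions $g_n(x):=\inf_{t\in U}\big(g_\infty(t)+n\,|x-t|\big)$. Because $g_\infty$ is finite at some point of $[p,q]$, each $g_n$ is finite; it is $n$-Lipschitz (hence continuous), nonnegative, nondecreasing in $n$, and $g_n\nearrow g_\infty$ pointwise. Consequently $g_n(x)\to g_\infty(x)=+\infty$ for $x\in V$, giving $X_\infty\supset V$; monotonicity in $n$ yields the increasing family; and $0\le g_n\le g_\infty$ gives $\int_p^q g_n\le\int_p^q g_\infty=:K$, which by the second step rules out the max-property. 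This furnishes the required family. The one delicate ingredient is the existence of an integrable-yet-infinite-on-$V$ majorant $g_\infty$, and that is supplied exactly by the hypothesis $\lambda(V\cap W)=0$.
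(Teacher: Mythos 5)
Your proof is correct, and its skeleton coincides with the paper's: both arguments prescribe $\A{f_n}$ directly as nonnegative continuous functions, nondecreasing in $n$, which blow up on $V$ yet have uniformly bounded integral over (a subinterval of) $W$ --- this is exactly where $\lambda(V\cap W)=0$ and outer regularity of Lebesgue measure enter --- and then recover $f_n$ by integrating twice. The implementations differ in three genuine ways. First, the paper builds the approximants as partial sums $\A{f_n}=s_1+\cdots+s_n$ of Tietze cutoff functions attached to a nested chain of open neighbourhoods of $V$ with geometrically decaying measure, whereas you construct the limit object $g_\infty$ (an integrable, lower semicontinuous function, infinite on $V$) first and then regularize it from below by Lipschitz inf-convolutions; both devices work, the paper's being slightly more elementary, yours more modular. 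Second, the paper concludes non-maxness by citing Corollary~\ref{cor:main}, which tacitly requires the constructed family to be lower bounded (true here, since $\A{f_n}\ge 0$ makes any $C\le 0$ admissible, though the paper never says so), while you verify the failure of condition (iii) of Lemma~\ref{lem:pales} by hand for the single triple $p<y<q$, so your argument is independent of Theorem~\ref{thm:main} altogether. Third, the paper disposes of the part of $V$ lying outside $W$ with ``no loss of generality $U=W$'', a reduction that silently requires arranging blow-up on $V\setminus W$ as well; your auxiliary open set $\widetilde O$, containing $U\setminus W$ but disjoint from $[p,q]$, carries out precisely this extension explicitly. In short: same strategy, but your version is more self-contained and closes the small gaps the paper leaves to the reader.
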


\begin{proof}
With no loss of generality, let us assume $U=W$. Fix $\varepsilon>0$. 
We will construct an increasing family satisfying \textrm{(i)} $\norma{\A{f_n}}_{L_1(U)}<\varepsilon$ for any $n \in \N$
and \textrm{(ii)} $X_\infty \supset V$.
Then, by Corollary~\ref{cor:main}, this family is not max.

By the regularity of Lebesgue measure, there exists an open sets $V \Subset G_k \Subset H_k \subseteq G_{k-1}$, $G_0 \subset U$ and
$\lambda(H_k)<\frac{\varepsilon}{2^k}$.
By Tietze's theorem let us consider a family $s_k \colon U \rightarrow [0,1]$ of continuous functions
$$s_k(x)=\begin{cases} 
1 & x \in G_k, \\
0 & x \in U \backslash H_k, \\
\textrm{continuous prolongation} & x \in H_k \backslash G_k.
\end{cases}$$
Then $\norma{s_k}_{L_1(U)} < \frac{\varepsilon}{2^{k}}$ for any $n \in \N$. Moreover, let us take
$$\A{f_n}:=s_1+s_2+\cdots +s_n.$$
Therefore $\norma{\A{f_n}}_{L_1(U)}<\varepsilon$ for any $n \in \N$. Whence, by Corollary~\ref{cor:main}, $(f_n)_{n \in \N}$ is not a max-family. Nevertheless $\A{f_n}(x)=n$ for any $n \in \N$ and $x \in V$, so $X_\infty \supset V$.

\end{proof}

\begin{prop}
Let $U$ be an interval, $(f_n)_{n \in \N}$, $f_n \colon U \rightarrow \R$ be a smooth, increasing family $(f_n)_{n \in \N}$, $f_n \colon U \rightarrow \R$, if $\lambda(X_\infty \cap V)>0$ for any open subset $V \subset U$, then $(f_n)$ is a max-family.
\end{prop}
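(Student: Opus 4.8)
The plan is to verify condition~(iii) of Lemma~\ref{lem:pales}. Fix $x<y<z$ in $U$. Since each $f_n'$ is a nowhere-vanishing Darboux function it has constant sign, so $f_n$ is strictly monotone; because $\A{-f}=\A{f}$ and both $\A{}$ and $X_\infty$ are invariant under affine images, Remark~\ref{rem:afin} lets me assume that every $f_n$ is increasing without changing the means or $X_\infty$. By Mikusi\'nski's result the family being increasing is equivalent to $\A{f_n}\ge\A{f_m}$ for $n\ge m$; in particular $\A{f_n}\ge\A{f_1}$ and $\A{f_n}(t)$ is nondecreasing in $n$ at every point. Setting $L_n(t):=\log f_n'(t)-\log f_n'(y)$, so that $L_n(y)=0$ and $L_n'=\A{f_n}$, and dividing numerator and denominator by $f_n'(y)$, the quantity in Lemma~\ref{lem:pales}(iii) becomes $-R_n$ with
$$R_n:=\frac{f_n(y)-f_n(x)}{f_n(z)-f_n(y)}=\frac{\int_x^y e^{L_n(t)}\,dt}{\int_y^z e^{L_n(s)}\,ds},$$
so it suffices to prove $R_n\to0$.

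For the numerator I would use that $D_n:=L_n-L_1$ is differentiable with $D_n'=\A{f_n}-\A{f_1}\ge0$, hence nondecreasing, with $D_n(y)=0$; thus $D_n\le0$ on $(x,y)$, giving $e^{L_n(t)}\le e^{L_1(t)}$ there and bounding the numerator by the finite constant $\int_x^y e^{L_1(t)}\,dt$. For the denominator I would exploit the hypothesis: $\lambda(X_\infty\cap(y,z))>0$ forces $\lambda(X_\infty\cap(y,s))>0$ for every $s$ in some nontrivial interval $(s_0,z)$. For such $s$, monotonicity of $D_n$ yields $L_n(s)=L_1(s)+D_n(s)\ge L_1(s)+\int_y^s(\A{f_n}-\A{f_1})$; since $\A{f_n}-\A{f_1}\ge0$ increases to $+\infty$ on the positive-measure set $X_\infty\cap(y,s)$, the monotone convergence theorem gives $\int_y^s(\A{f_n}-\A{f_1})\to+\infty$, whence $L_n(s)\to+\infty$ and $e^{L_n(s)}\to+\infty$. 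Applying monotone convergence once more to the nonnegative integrands $e^{L_n(s)}$, which are nondecreasing in $n$ on $(y,z)$, forces $\int_y^z e^{L_n(s)}\,ds\to+\infty$.

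Together these give $R_n\to0$, i.e.\ condition~(iii), so $(f_n)$ is a max-family. I expect the denominator to be the delicate step: I must pass from the pointwise blow-up of $\A{f_n}$ on a set of positive measure to blow-up of its antiderivative $L_n$, while $\A{f_n}$ is only a pointwise derivative and may fail to be Lebesgue integrable. This is precisely why I would work through $D_n=L_n-L_1$, which is monotone, so that its derivative is automatically integrable with the one-sided bound $\int_y^s(\A{f_n}-\A{f_1})\le D_n(s)$ -- the only estimate the argument requires, and the reason Corollary~\ref{cor:main} (which presupposes the lower-bounded hypothesis absent here) is not invoked directly. The weakening from $X_\infty=U$ in Proposition~\ref{prop:max_2013} to positivity of measure enters exactly in the monotone-convergence step that sends $\int_y^s(\A{f_n}-\A{f_1})$ to infinity.
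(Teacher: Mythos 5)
Your proof is correct, and it takes a genuinely different route from the paper's. The paper deduces the proposition from Corollary~\ref{cor:main}: it begins by assuming $\A{f_n}>C$ for some $C<0$ --- i.e.\ it silently inserts the lower-bounded hypothesis, which Corollary~\ref{cor:main} requires --- and then proves $\int_a^b \A{f_n}\to\infty$ by a measure-regularity (Chebyshev/Egorov-style) argument: for each $M$ it picks $n_M$ so that all but $1/M$ of the measure of $X_\infty\cap[a,b]$ lies in $\{\A{f_{n_M}}>M\}$, splits the integral to obtain $\int_a^b \A{f_{n_M}} \ge C(b-a)-1+M\lambda\big(X_\infty\cap[a,b]\big)$, lets $M\to\infty$, and finishes by monotonicity of $n\mapsto\int_a^b\A{f_n}$. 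You never touch Theorem~\ref{thm:main} or its corollary: you verify condition (iii) of Lemma~\ref{lem:pales} directly, writing the ratio as a quotient of integrals of $e^{L_n}$, bounding the numerator by its $n=1$ value (via monotonicity of $D_n=L_n-L_1$), and forcing the denominator to infinity by two applications of the monotone convergence theorem. The trade-off: the paper's route is shorter once Theorem~\ref{thm:main} is available, but as written it proves a statement with an extra hypothesis not present in the proposition; your route is self-contained modulo Lemma~\ref{lem:pales} and Mikusi\'nski's lemma, and establishes the proposition exactly as stated, with no constant $C$ anywhere. Your detour through the monotone function $D_n$ --- so that only the nonnegative, automatically integrable function $\A{f_n}-\A{f_1}$ is ever integrated, with the one-sided bound $\int_y^s(\A{f_n}-\A{f_1})\le D_n(s)$ --- is exactly the right device for the fact that $\A{f_n}$ is merely a pointwise derivative and need not be Lebesgue integrable; the paper sidesteps this issue only because of the added bound $\A{f_n}>C$, which keeps the negative part of each $\A{f_n}$ bounded. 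One cosmetic remark: your reduction to ``some nontrivial interval $(s_0,z)$'' is unnecessary, since the hypothesis applies to every open $V\subset U$, so $\lambda\big(X_\infty\cap(y,s)\big)>0$ holds for every $s\in(y,z)$ directly.
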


\begin{proof}
Assume $\A{f_n}> C$ for some $C<0$. Fix any $a,\,b \in U$, $a<b$. We have $\lambda(X_\infty \cap [a,b])>0$. 
For any $M>0$, one has
$$\bigcup_{n \in \N} \{x \in U \colon \A{f_n}(x)>M\} \supset X_{\infty}.$$
In particular, by the regularity of Lebesgue measure and monotonicity of $\A{f_n}$, there exists $n_M$ such that 
$$ \lambda \big([a,b] \cap X_\infty \backslash \{x \in U \colon \A{f_{n_M}}(x)>M\}\big)<1/M.$$
Equivalently,
$$ \lambda \big([a,b] \cap X_\infty \cap \{x \in U \colon \A{f_{n_M}}(x)>M\}\big)>\lambda \big([a,b] \cap X_\infty\big) - 1/M.$$

Now,
\begin{align*}
\int \limits_a^b \A{f_{n_M}} &\ge C \cdot (b-a) + M \cdot \lambda\big(\{x \in U \colon \A{f_{n_M}}(x)>M\}\big) \\
&\ge C \cdot (b-a) + M \cdot \lambda\big([a,b] \cap X_\infty \cap \{x \in U \colon \A{f_{n_M}}(x)>M\}\big) \\
&\ge C \cdot (b-a) + M \cdot \left( \lambda \big([a,b] \cap X_\infty\big) - 1/M \right) \\
&= C \cdot (b-a)-1+ M \cdot \lambda \big([a,b] \cap X_\infty\big)
\end{align*}
Upon taking a limit one gets
$$\lim_{M \rightarrow \infty} \int \limits_a^b \A{f_{n_M}} =\infty \textrm{ for any } a<b.$$
Therefore, by the the monotonicity property,
$$\lim_{n \rightarrow \infty} \int \limits_a^b \A{f_n} =\infty \textrm{ for any } a<b.$$
\end{proof}


\begin{prop}
Let $U$ be an interval. There exists an increasing max-family $(f_n)_{n \in \N}$, $f_n \colon U \rightarrow \R$ satisfying $\dimH(X_\infty)=0$.
\end{prop}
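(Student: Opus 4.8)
The plan is to build the family through its $\A{}$-operators and recover the functions by integration. Since the family I construct will be smooth and lower bounded, Corollary~\ref{cor:main} reduces the max-family property to the divergence $\lim_{n}\int_p^q \A{f_n}=\infty$ for every $p<q$, and the increasing property reduces, via Mikusi\'nski's lemma, to the pointwise monotonicity $\A{f_n}\le \A{f_{n+1}}$. The whole problem thus becomes: produce a pointwise nondecreasing sequence of nonnegative smooth functions $g_n=\A{f_n}$ whose integral over every subinterval tends to $+\infty$, while the set where $g_n(x)\to\infty$ has Hausdorff dimension $0$. Once such $g_n$ are found, I would set $f_n(x):=\int_{x_0}^x \exp\big(\int_{x_0}^t g_n\big)\,dt$ to obtain smooth strictly increasing $f_n$ with $\A{f_n}=g_n$; nonnegativity of $g_n$ then gives $f_n'(y)/f_n'(x)=\exp(\int_x^y g_n)\ge 1\ge e^{C(y-x)}$ for any $C<0$ and $x<y$, so the family is automatically lower bounded.

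The key observation driving the construction is that an infinite integral on every subinterval is perfectly compatible with a \emph{small} set of points where the limit is actually infinite: the integrals can blow up through mass concentrated on shrinking supports rather than through a large ``infinite'' set. Accordingly, I would fix a dense sequence $(t_j)$ in the interior of $U$, a nonnegative $C^\infty$ bump $\phi$ supported in $[-1,1]$ with $\int\phi=1$, and geometric radii $r_j:=2^{-j}$; then put $h_j(x):=r_j^{-1}\phi\big((x-t_j)/r_j\big)$, so that $\supp h_j=I_j:=[t_j-r_j,t_j+r_j]$, $\int h_j=1$, and $h_j\ge 0$. Define $g_n:=\sum_{j=1}^n h_j$ and $g:=\sum_{j=1}^\infty h_j=\lim_n g_n$. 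The sequence $(g_n)$ is nonnegative, smooth, and nondecreasing, exactly as required, so the resulting $(f_n)$ is smooth, increasing, and lower bounded.

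It then remains to verify the two quantitative requirements. For the max-family property I would note that every subinterval $[p,q]\subset U$ contains $I_j$ entirely for infinitely many $j$ (density of $(t_j)$ together with $r_j\to 0$), and each such bump contributes $\int_p^q h_j=1$; hence $\int_p^q g_n\uparrow\int_p^q g=\infty$ and Corollary~\ref{cor:main} yields the max-family property. For the dimension bound, observe that if $x\in I_j$ for only finitely many $j$ then $g(x)$ is a finite sum of finite numbers, so $X_\infty=\{x:g(x)=\infty\}\subseteq\limsup_j I_j$. Finally, for every $s>0$ the tails $\bigcup_{j\ge N}I_j$ cover $\limsup_j I_j$ by intervals of diameter $2r_j=2^{1-j}$ with $\sum_{j\ge N}(2r_j)^s=2^s\sum_{j\ge N}2^{-js}\to 0$, so $H^s(\limsup_j I_j)=0$ for every $s>0$ and thus $\dimH(X_\infty)=0$. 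The main point to get right is precisely the tension between these two demands: the supports must shrink fast enough that $\sum_j r_j^s<\infty$ for every $s>0$ (geometric radii already achieve this, forcing dimension $0$), yet remain dense so that infinitely many of them sit inside each subinterval and drive the integrals to infinity.
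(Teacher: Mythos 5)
Your proposal is correct and takes essentially the same approach as the paper: both prescribe $\A{f_n}$ as partial sums of nonnegative bump functions of non-shrinking mass on rapidly shrinking supports centered at a dense sequence, invoke Corollary~\ref{cor:main} to get the max-family property from the divergence of $\int_p^q \A{f_n}$, and bound $\dimH(X_\infty)$ by covering $X_\infty\subseteq\limsup_j I_j$ with the tails of the supports, whose diameters have summable $s$-th powers for every $s>0$. The differences are cosmetic (one smooth unit-mass bump per index versus the paper's Tietze plateaus of height $k^2$ around the first $k$ rationals), and your explicit reconstruction of $f_n$ from $g_n$ together with the verification of lower boundedness via $g_n\ge 0$ makes explicit a point the paper leaves implicit.
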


\begin{proof}

By Corollary~\ref{cor:main} it is equivalent to prove that $\int_U \A{f_n} \rightarrow \infty$ for any open interval $U$.
Let us enumerate a rational numbers in a set $U$: 
$$\Q \cap U=(q_1,q_2,\ldots).$$
Let
\begin{align*}
Q_{k}&=\bigcup_{i=1}^{k} B(q_i,\frac{1}{k^2 \cdot 2^i}),\\
{\hat Q}_{k}&=\bigcup_{i=1}^{k} B(q_i,\frac{2}{k^2 \cdot 2^i}).
\end{align*}
Then $Q_k$ is a finite sum of open intervals and $\abs{Q_k}\le \tfrac{1}{k^2}$, $\abs{\hat{Q_k}}\le \tfrac{2}{k^2}$.
By Tietze's theorem there exists functions $c_k \colon U \rightarrow [0,k^2]$, 
$$
c_k=
\begin{cases} k^2 & Q_{k}, \\
 0 & U \backslash \hat{Q_k}, \\
 \textrm{continuous prolongation} & \textrm{otherwise}.
\end{cases} 
$$
Let $\A{f_n}=c_1+\cdots+c_n$. Fix $x,\,y \in U$, $x<y$. By Corollary~\ref{cor:main} it is sufficient to prove that
$$\lim_{n \rightarrow \infty} \int_x^y \A{f_n}(u)du =\infty.$$

Let us take $q_i\in \Q \cap (x,y)$ and $k_0$ such that $B(q_i,\frac{1}{k_0^2 \cdot 2^i}) \subset (x,y)$.
Then, for $k>\max(i,k_0)=:k_0$, one obtains
$$\int_x^y c_k(u)du \ge \int_{B(q_i,1/(k^2 \cdot 2^i))} c_k(u) du = \int_{B(q_i,1/(k^2 \cdot 2^i))} k^2 du > 2^{1-i}. $$

Therefore, for $n> k_0$, 
$$
\int_x^y \A{f_n}(u)du
\ge \sum_{k=k_0+1}^n \int_x^y c_k(u)du
> (n-k_0)2^{1-i}.
$$
Whence
$$\lim_{n \rightarrow \infty} \int_x^y \A{f_n}(u)du = \infty.$$
So $(f_n)$ is a max-family. Moreover, note that $c_1(x)+\ldots +c_{n-1}(x) <n^3$ for any $n\in \N$ and $x \in U$. Thus
\begin{align*}
X_{\infty} &\subseteq \bigcap_{n=1}^{\infty} \bigcup_{k=n}^{\infty} \supp c_{k} \subseteq \bigcap_{n=1}^{\infty} \bigcup_{k=n}^{\infty} \hat{Q_k};\\
X_{\infty} &\subseteq \bigcup_{i=1}^{n} B\left(q_i,\frac2{n^2\cdot 2^i}\right) \cup \bigcup_{i=n+1}^{\infty} B\left(q_i,\frac2{i^2\cdot 2^i}\right),\quad n \in \N.
\end{align*}

Whence, for any $d>0$ and $n \in \N$, one gets
\begin{align*}
H^d(X_\infty) 
&\le \sum_{i=1}^{n} \left(\frac4{n^2\cdot 2^i} \right)^d + \sum_{i=n+1}^{\infty} \left(\frac4{i^2\cdot 2^i}\right)^d \\
&\le \frac{4^d}{n^{2d}} \sum_{i=1}^{\infty} \frac1{2^{id}}=\frac{4^d}{n^{2d} (1-2^{-d})}
\end{align*}
Passing $n \rightarrow \infty$ we get $H^d(X_\infty)=0$, $d>0$. So $\dimH(X_\infty)=0$.

\end{proof}
\begin{rem}
Let us notice that $X_\infty \supset \bigcap_{k=1}^{\infty} Q_k \supsetneq \Q$ because $\Q$ is not a $G_\delta$-set.
Moreover it is known that $X_\infty$ is a $G_\delta$-set for any max-family (cf. \cite[pp.204--205]{P2013}), whence $X_\infty$ cannot be a set of rational numbers.
\end{rem}

\end{document}